\numberwithin{equation}{section}  \makeatletter\@addtoreset{equation}{section}
\newtheorem {theorem}{Theorem}[section]         \newtheorem {lemma}[theorem]{Lemma}     
\newtheorem {corollary}[theorem]{Corollary}     \newtheorem {remark}[theorem]{Remark}   
       \newtheorem {proposition}[theorem]{Proposition}
\newcommand{\C}{\mathbb C}    \newcommand{\R}{\mathbb R}   \newcommand{\Z}{\mathbb Z} 	
\newcommand{\Fnug}{\mathcal{F}^{2}_{\Gamma, \chi}}
\newcommand{\Fnugr}{\mathcal{F}^{2}_{\Gamma_r, \chi}}
\newcommand{\Fnugd}{\mathcal{F}^{2}_{\Gamma_{2g}, \chi}}
\newcommand{\VgR}{\mathbb{V}^{2g}_\R}
\newcommand{\VgC}{\mathbb{V}^{g}_\C}
\newcommand{\VuC}{\mathbb{V}^{1}_\C}
\newcommand{\VuoC}{{\mathbb{V}_\C^1}^{\perp}}
\newcommand{\normnu}[1]{\left\Vert#1\right\Vert_{\Gamma}}
\newcommand{\normnur}[1]{\left\Vert#1\right\Vert_{\Gamma_r}}
\newcommand{\scalnur}[1]{\left<#1\right>_{\Gamma_r}}
\newcommand{\scalnue}[1]{\left<#1\right>_{\Gamma}}
\newcommand{\mes}{d\lambda}
\newcommand{\nuw}{\nu}
\newcommand{\epf}{e^{\alpha,\nuw}_{n}}
\newcommand{\epfo}{e^{\alpha,\nuw}}
\newcommand{\epfon}{e^{\alpha+n,\nuw}}
\newcommand{\eqf}{e^{\alpha,\nuw}_{n'}}
\begin{document}

\title[]{Concrete description for rank one $(\Gamma, \chi)$-theta Fock-Bargmann space in high dimension}
\author{M. Souid El Ainin}
 \address{Department of Mathematics,  P.O. Box 1014,   Faculty of Sciences,  Mohammed V-Agdal University,  Rabat,  Morocco}
\email{msouidelainin@yahoo.fr}

%================================================================================================
\begin{abstract}
We deal with the $(\Z\omega, \chi)$-theta Fock-Bargmann space consisting of holomorphic
automorphic functions associated to given discrete subgroup in $\C^g$ of rank one and given character $\chi$.
We give concrete description of its elements, it looks like a tensor product of a theta Fock-Bargmann space on the complex plane $\C$
and the classical Fock-Bargmann space on $\C^{g-1}$. Moreover, we construct an orthonormal basis and we give explicit expression of its
reproducing kernel function in terms of Riemann theta function.
\end{abstract}

\keywords {Rank one discrete subgroup; $(\Gamma, \chi)$-theta Fock-Bargmann space; orthonormal basis; reproducing kernel,  theta function.}
\maketitle
%================================================================================================

\section{Introduction}

%%%%%%%%%%%%%%%%%%%%%%%%%%%%%%%%%%%%%%%%%%%%%%%%%%%%%%%%%%%%%%%%%%%%%%%%%%%%%%%%%%%%%%%%%%%%%%%%%%%%%%%%%%%%%%%%%%%%%%%%%%%%%
 Let $\VgC=\C^{g}$ be a $g$-dimensional complex vector space endowed with a positive definite hermetian form $H(u, v )$.
Associated to  discrete subgroup $\Gamma_r$  of rank $r$; $0\leq r \leq 2g$,  in the $2g$-real vector space $\VgR=\R^{2g}$,
we consider the space of holomorphic $(\Gamma_r,\chi)$-automorphic functions consisting of the complex valued holomorphic functions on $\VgC$, $f\in \mathcal{O}(\VgC)$,  displaying the functional equation
\begin{equation}\label{IntrFctEq1}
f(u+\gamma) = \chi(\gamma)  e^{ H\left(u+\frac{\gamma}2, \gamma\right) } f(u);\quad u\in \VgC,  \gamma\in\Gamma_r,
\end{equation}
where $\chi$ is a given mapping on $\Gamma_r$ such that $|\chi(\gamma)|=1$.
We define the norm  $\normnur{f}$
% \begin{equation}\label{IntrNorm1}
%\normnur{f}^2 := \int_{\Lambda(\Gamma_r)} |f(u)|^2 e^{-\nu H(u, u)} \mes(u),
%\end{equation}
to be the one associated to the inner scaler product
 \begin{equation}\label{IntrScal1}
\scalnur{f,g} := \int_{\Lambda(\Gamma_r)} f(u) \overline{g(u)}  e^{- H(u, u)} \mes(u),
\end{equation}
where $\mes(u)$ is the usual Lebesgue measure on $\VgC$ and $\Lambda(\Gamma_r)$ is a fundamental domain of $\Gamma_r$, which represents in $\VgR$ the orbital group $\VgR/\Gamma_r$ with respect to its quotient topology. In fact, for every $f,g$ satisfying \eqref{IntrFctEq1},
the function $ f(u) \overline{g(u)} e^{- H(u, u)}$ is $\Gamma_r$-periodic the quantity in \eqref{IntrScal1}
makes sense and is independent of the choice of $\Lambda(\Gamma_r)$.
Then one performs the functional space $\Fnugr(\VgC)$ of $(L^2, \Gamma_r, \chi)$-theta functions, i.e, the space of all holomorphic
$(\Gamma_{r},\chi)$-automorphic functions on $\VgC$ satisfying the growth condition $\normnur{f}^2 < +\infty$. That is
 \begin{align}\label{Space}
\Fnugr(\VgC) := \left\{f\in \mathcal{O}(\VgC) \mbox{ displaying }  \eqref{IntrFctEq1} \mbox{ and }
\normnur{f}^2 := \scalnur{f,f} < +\infty
             \right\}.
\end{align}
Note for instance that for $\Gamma_0=\{0\}$,  equation \eqref{IntrFctEq1} reads simply $ f(u) = \chi(0) f(u)$ so that $\chi=1$.
Furthermore, since $\Lambda(\Gamma_0)=\VgC$, the corresponding $\mathcal{F}^{2}_{\Gamma_0, \chi}(\VgC)$
reduces further to the classical Bargmann space on $(\VgC, H)$, i.e;
\begin{equation}\label{Bargmann}
 \mathcal{B}^{2}_H(\VgC) := \left\{ f\in \mathcal{O}(\VgC) ; \quad \int_{\VgC} |f(u)|^2 e^{- H(u, u)} \mes(u)<+\infty \right\}.
 \end{equation}
 For maximal rank ($r=2g$), the $\Lambda(\Gamma_{2g})$ is compact and the space $\Fnugd(\VgC)$ is nontrivial under the Riemann-Dirac quantization condition
 $\chi(\gamma+\gamma')=\chi(\gamma)\chi(\gamma')e^{i \Im( H(\gamma, \gamma'))}$; $\gamma, \gamma'\in \Gamma_{2g},$
 and is of finite dimension that is given by explicit formula involving the volume of the complex torus $\C^g/\Gamma_{2g}$.
The spectral properties relevant to the $(L^2, \Gamma_{2g},  \chi)$-theta functions are investigated in \cite{Bump1996,GhIn2008}.
This was worked out by rather different methods in \cite{Folland2004} for $\chi\equiv 1$.

In spite of the fact that some analytical aspects of the $(L^2,\Gamma_{r},\chi)$-automorphic functions for lower rank ($r=0$) and maximal
rank ($r=2g$) were known and were well discussed in the literature, the general problem for nontrivial non-compact lattices (i.e. $\Gamma_{r}$ is of rank $1\leq r \leq 2g $) is still far from being understood.
The first investigation in this direction has being discussed recently in \cite{GhIn13JMP} dealing
with the rank $1$ in two dimension. It is shown there that for $\chi$ being a character the  $(\Z,\chi)$-theta Fock-Bargmann space $\mathcal{F}^{2,\nu }_{\Gamma, \chi}(\C)$ is of infinite dimension and an orthonormal basis of such Hilbert space is constructed explicitly, to
 wit
 $$\psi_n^{\alpha,\nu} (z) :=  \left(\frac{2\nu}{\pi}\right)^{1/4} e^{\frac{\nu}2 z^2  + 2i\pi (\alpha + n) z -\frac{\pi^2}{\nu}(n+\alpha)^2 },$$
 for varying $n \in\Z$ and some fixed real number $\alpha$.
Moreover explicit expression of the reproducing kernel is given there in terms of the modified theta function
\begin{equation}\label{RiemannTheta}
 \theta_{\alpha, \beta}(z|\tau)    : =  \sum_{n\in \Z} e^{i\pi (n + \alpha)^2 \tau + 2i\pi (n + \alpha)(z + \beta)},
\end{equation}
where $z\in \C$ and $\tau$ is confined to the upper half-plane.
In \cite{InAb2013}, chaoticity of a shift operator on the obtained basis is studied by Intissar.

In the present paper we follow in spirit \cite{GhIn13JMP} by Ghanmi and Intissar in order to generalize their results to high dimension. More precisely, we deal with the space $\mathcal{F}^{2}_{\Gamma, \chi}(\VgC)$ corresponding to the nontrivial discrete subgroup $\Gamma=\Z \omega $ in $\VgC$ with  $g\geq 2$. Our main aim is to construct a concrete orthonormal basis of $\mathcal{F}^{2}_{\Gamma, \chi}(\VgC)$ and to explicit its reproducing kernel. In fact, the space $\mathcal{F}^{2}_{\Gamma, \chi}(\VgC)$ looks like a tensor product of $\mathcal{F}^{2}_{\Z \omega, \chi}(\C \omega)$ defined through \eqref{Space}
with the classical Fock-Bargmann space  $\mathcal{B}^{2}_H(\VuoC)$ on $\VuoC$, as defined in \eqref{Bargmann}. Namely, Theorem \ref{ThmExpansion} characterizes the $(\Z\omega, \chi)$-theta Fock-Bargmann space $\Fnug(\VgC)$ as the space of all series
\begin{equation*}
      \sum\limits_{n\in\Z} \psi_n(z') e^{\frac{{{\nuw}} }2 z^2 + 2i\pi (\alpha + n) z},
     \end{equation*}
satisfying the growth condition
      \begin{align*}
       \left(\dfrac{\pi}{2{{\nuw}} }\right)^{1/2}\sum\limits_{n\in\Z}e^{\frac{2\pi^2}{\nuw}(n+\alpha)^2}
       \|\psi_n\|_{L^2(\VuoC,  \widetilde{H})}^{2}  <+\infty ,
       \end{align*}
 for some holomorphic functions  $\psi_n$ and where $\|\psi_n\|_{L^2(\VuoC,\widetilde{H})}^{2} $ denotes
 the square norm of $\psi_n$ in the Hilbert space $L^2(\VuoC, e^{- \widetilde{H}(\widetilde{u},\widetilde{u})}\mes(\widetilde{u}))$.
Moreover,  Theorem \ref{ThmbasisO} shows that  $\Fnug(\VgC)$ is a reproducing kernel Hilbert space and the set of functions
 \begin{equation}
 e^{\alpha,{\nuw} }_{n,\mathbf{k}}(z,z')=:e^{\frac{\nuw}2 z^2 + 2i\pi (\alpha + n) z} z'^{\mathbf{k}}
 \end{equation}
where $z'^{\mathbf{k}}= z_2^{k_2} z_3^{k_3} \cdots z_{g}^{k_{g}}$ for varying $n\in\Z$ and varying multi-index  $\mathbf{k}=(k_{2},  \cdots ,  k_{g})$ of positive integers,
 constitutes an orthogonal basis of $\Fnug(\VgC)$ with
 \begin{align*}
\normnu{e^{\alpha,{{\nuw}} }_{n,\mathbf{k}}}^2 =  \sqrt{\pi} 2^{|\mathbf{k}|} \mathbf{k}!  \left(\dfrac{1}{2\nuw } \right)^{\frac{2g-1}2 + |\mathbf{k}|} e^{\frac{2\pi^2}{{{\nuw}} }(n+\alpha)^2},
\end{align*}
where $|\mathbf{k}|= k_2+k_3 + \cdots + k_{g}$, and $\mathbf{k}!= k_2!k_3! \cdots k_{g}!$. The corresponding
reproducing kernel function is given explicitly in terms of the theta function (given through \ref{RiemannTheta}) by
\begin{equation}\label{Kernel-explicit}
 K(u,v) =  \left(\frac 1{2\pi\nuw}\right)^{1/2} \left({2\nuw}\right)^{-g} e^{\frac{\nuw}2 (z^2 + \overline{w}^2)}\theta_{\alpha,0} \left(z-\overline{w} \bigg |  {2i\pi}\right)  e^{\widetilde{H}(\widetilde{u},\widetilde{v})}.
   \end{equation}
   for $u= z\omega+\widetilde{u}$ and $v= w\omega+\widetilde{v}$, so that
 \begin{equation*} 
 f(u) =  \int_{v\in \Lambda(\Gamma_1)} K(u,v) f(v) e^{- H(v,v)} \mes(v)
 \end{equation*}
 for every $f\in \Fnug(\VgC)$.

The layout of the paper is as the following.
In Section 2, we give some basic properties of the space $\mathcal{F}^{2}_{\Z \omega, \chi}(\VgC)$ that we need in the elaboration of our main results. We begin with the non-triviality condition and next we describe their elements by giving their explicit expansions.
Section 3, is devoted to the exact statement of the main results and to their proofs. Mainely, we construct an explicit orthonormal basis of
$\mathcal{F}^{2}_{\Z \omega, \chi}(\VgC)$ and express its reproducing kernel function in terms of the modified Riemann theta function.

%%%%%%%%%%%%%%%%%%%%%%%%%%%%%%%%%%%%%%%%%%%%%%%%%%%%%%%%%%%%%%%%%%%%%%%%%%%%%%%%%%%%%%%%%%%%
\section{Preliminaries and first properties}
%%%%%%%%%%%%%%%%%%%%%%%%%%%%%%%%%%%%%%%%%%%%%%%%%%%%%%%%%%%%%%%%%%%%%%%%%%%%%%%%%%%%%%%%%%%%%%%%%%%%%%%%%%%%%%%%%%%%%%%%%%%%%
Let  $\Gamma =\Z \omega $ be a  discrete subgroup of rank one in $(\VgC, +)$ for given nonzero vector in $\omega\in \VgC \setminus\{0\}$.
 The space $\VgC$ can be seen as direct sum of $\VuC:=\C \omega$ and its orthogonal $\VuoC$ with respect to $H$, so that one can split the hermitian form $H$ as
 \begin{equation}\label{splitH}
H(u,v)= z\overline{w} H(\omega, \omega) + \widetilde{H}(\widetilde{u}, \widetilde{v})
\end{equation}
for $u= z\omega + \widetilde{u}$ and $v= w\omega + \widetilde{v}$, where $\widetilde{u}$ is the orthogonal projection of $u$ in $\VuoC$ and $\widetilde{H}$ is the restriction of $H$ to $\VuoC$. The orthogonal $\VuoC$ is generated by some $\C$-linearly independent vectors $\omega_{2},  \cdots,  \omega_{g} \in \VgC$, so that any element $u$ in $\VgC$ can be identified its coordinates in the basis $\{\omega, \omega_2, \cdots , \omega_{g}\}$, i.e.
 $$
 u= z \omega + z_2 \omega_2 \cdots +z_{g} \omega_{g} =  (z ; z_2 \cdots,z_{g})=( z; z').
 $$
  Moreover, the fundamental cell can be identified to
\begin{equation}\label{FundCell}
 \Lambda(\Gamma)=([0,1]\times \R) \times \VuoC.
\end{equation}
   For simplicity, we have to choose $\omega_j$ such that
    \begin{equation}\label{orth-basis}
H(\omega_i,\omega_j)= \delta_{ij} H(\omega, \omega); \qquad i,j=1,2, \cdots.
\end{equation}
The functional equation \eqref{IntrFctEq1} can be written in the
complex coordinates $(z, z')$, with respect to such fixed basis, as follows:
\begin{equation}\label{func-eqm1}
f(z+m, z') = \widetilde{\chi}(m) e^{{{\nuw}}  (z+\frac{m}2) m}f(z, z'),
\end{equation}
for every  $m\in \Z$,  where we have set
 \begin{equation}\label{nu}
{{\nuw}} = H(\omega, \omega)>0 \quad \mbox{ and } \quad \widetilde{\chi}(m)=\chi(m\omega).
\end{equation}

 \begin{remark}\label{remark1}
 In the functional equation \eqref{func-eqm1},  the $z'$ can be seen as parameter. Thus we are dealing only with the one complex variable $z$. Nevertheless, the condition of $L^2$-integrable will involves such parameter as we will see later.
 \end{remark}

 Below, we provide sufficient and necessary condition on the mapping $\chi$ to
the functional space $\Fnug (\VgC)$ be nonzero.

\begin{lemma}\label{lemSC-trivial}
If the space $\Fnug(\VgC)$ is non-trivial, then $\chi$  is a character.
\end{lemma}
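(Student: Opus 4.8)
The plan is to exploit the consistency of the functional equation \eqref{func-eqm1} under composition of translations. Since $\Fnug(\VgC)$ is assumed non-trivial, fix a function $f$ in it that is not identically zero and pick a point $(z_0,z_0')$ at which $f(z_0,z_0')\neq0$. The strategy is to compute the translate $f(z+m+m',z')$ in two different ways and compare; the comparison will force a multiplicativity relation on $\widetilde{\chi}(m)=\chi(m\omega)$.

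First I would apply \eqref{func-eqm1} directly with the integer $m+m'$, obtaining
$$f(z+m+m',z') = \widetilde{\chi}(m+m')\, e^{\nuw\left(z+\frac{m+m'}2\right)(m+m')} f(z,z').$$
Then I would reach the same translate in two steps, translating first by $m'$ and then by $m$, which yields
$$f(z+m+m',z') = \widetilde{\chi}(m)\widetilde{\chi}(m')\, e^{\nuw\left(z+m'+\frac{m}2\right)m + \nuw\left(z+\frac{m'}2\right)m'} f(z,z').$$

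The key step, and the only place where the rank-one structure really enters, is to check that the two exponential cocycle factors coincide. Expanding both exponents, each reduces to $\nuw\big(zm+zm'+\tfrac{m^2}2+mm'+\tfrac{m'^2}2\big)$; they agree precisely because $\nuw=H(\omega,\omega)$ is real, so no imaginary twist $i\,\Im H(m\omega,m'\omega)$ survives (indeed $H(m\omega,m'\omega)=mm'\nuw\in\R$). In higher rank this is exactly the term that would produce the Riemann--Dirac condition recalled in the introduction; here it vanishes. Equating the two expressions and cancelling the common nonzero factor (the shared exponential times $f(z_0,z_0')$) gives $\widetilde{\chi}(m+m')=\widetilde{\chi}(m)\widetilde{\chi}(m')$ for all $m,m'\in\Z$. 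Since $\widetilde{\chi}(m)=\chi(m\omega)$ and $\Gamma=\Z\omega$, this says $\chi(\gamma+\gamma')=\chi(\gamma)\chi(\gamma')$, i.e. $\chi$ is a character.

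I do not expect any serious obstacle: the argument is a cocycle/consistency computation, and the only care needed is the bookkeeping in the exponent comparison, which is where the reality of $\nuw$ is used. The non-triviality hypothesis enters solely to guarantee a point where $f$ does not vanish, licensing the final cancellation.
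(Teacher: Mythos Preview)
Your argument is correct and follows essentially the same route as the paper: both compute the double translate in two ways, cancel the nonzero value $f(u_0)$, and use that $H(m\omega,m'\omega)=mm'\nu\in\R$ so the Riemann--Dirac phase $e^{i\Im H(\gamma,\gamma')}$ collapses to $1$. The only cosmetic difference is that you work with the coordinate form \eqref{func-eqm1} while the paper uses \eqref{IntrFctEq1} directly and first derives the general condition $\chi(\gamma+\gamma')=e^{i\Im H(\gamma,\gamma')}\chi(\gamma)\chi(\gamma')$ before specializing.
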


\begin{proof} This is an immediate consequence of the fact $f(u+(\gamma+\gamma'))=f((u+\gamma)+\gamma')$, for every  $u\in\VgC$ and $\gamma,\gamma'\in\Z\omega $. Indeed, let $f$ be a nonzero function in $\Fnug(\VgC)$ and $u_0 \in \VgC$ such that $f(u_0)\ne 0$. Direct computation yields
\begin{align}\label{Eq1}
f(u_0+(\gamma+\gamma'))  %&= \chi(\gamma+\gamma')  e^{\nu H\left(u_0+\frac{\gamma+\gamma'}2, \gamma+\gamma'\right) } f(u_0) \\&
                           =\chi(\gamma+\gamma')e^{ H\left(u_0+\frac{\gamma}2, \gamma\right) }e^{ H\left(u_0+\frac{\gamma}2,\gamma'\right) }e^{ H\left(\frac{\gamma'}2, \gamma+\gamma'\right) }f(u_0)
\end{align}
as well as
\begin{align}
f((u_0+\gamma)+\gamma')  &= \chi(\gamma')  e^{ H\left(u_0+\gamma+\frac{\gamma'}2, \gamma'\right) } f(u_0+\gamma)  \nonumber \\
%  &=\chi(\gamma)\chi(\gamma')e^{\nu H\left(u_0+\gamma+\frac{\gamma'}2, \gamma'\right) }  e^{ H\left(u_0+\frac{\gamma}2, \gamma\right) } f(u_0) \\
&=\chi(\gamma)\chi(\gamma')e^{ H\left(\frac{\gamma+\gamma'}2, \gamma'\right) }  e^{ H\left(u_0+\frac{\gamma}2, \gamma'\right) }  e^{ H\left(u_0+\frac{\gamma}2, \gamma\right) } f(u_0).\label{Eq2}
\end{align}
For instance, by equating the right hand sides of \eqref{Eq1} and \eqref{Eq2}, we see that
 $\chi$ satisfies the Riemann-Dirac quantization condition
\begin{align}\label{character}
 \chi(\gamma+\gamma') = e^{i \Im(H(\gamma, \gamma'))} \chi(\gamma) \chi(\gamma')
 \end{align}
for every $\gamma, \gamma'\in \Z \omega$. Now, since $H$ takes real values on $\Z \omega\times \Z \omega$, we get
$\Im(H(\gamma, \gamma'))=0$ and therefore the cocycle condition \eqref{character} becomes equivalent to say that $\chi$ is a character of $\Z \omega$.
 \end{proof}

\begin{remark}
The Riemann-Dirac quantization condition RDQ given through \eqref{character} can be interpreted geometrically and algebraically as follows:
\begin{itemize}
  \item Geometric interpretation: The RDQ condition is equivalent to that the complex valued
  function $J_{,\chi}(\gamma,u):=\chi(\gamma )e^{i \Im(H(u,\gamma))}$, on $\Gamma\times\VgC$, be an
   automorphy factor satisfying the chain rule,
   $J_{,\chi}(\gamma_1+\gamma_2,u)=J_{\chi}(\gamma_1,u+\gamma_2)J_{\chi}(\gamma_2,u)$, so that the mapping
   $\phi_\gamma(u;v) := (u+\gamma; \chi(\gamma )e^{i \Im(H(u,\gamma))}.   v)$ defines an action of $\Gamma$ on $\VgC\times\C$ and the
   associated quotient space $(\VgC\times\C)/\Gamma$ is a line
   bundle over $\VgC/\Gamma$ with fiber $\C=\tau^{-1}([u])$, with $\tau$ being the canonical projection map  $\tau: (\VgC\times\C)/\Gamma\longrightarrow \VgC/\Gamma$.
   % is the natural one induced from the canonical projection $ \pi: \VgC\longrightarrow \VgC/\Gamma_1$.
   Thus, one can regard the space $\Fnug(\VgC)$ as the space of holomorphic sections of the above line bundle
   over the quasi-torus $\VgC/\Gamma$.

  \item Algebraic interpretation: Under the RDQ condition, the map $\chi$ induces a group homomorphism from $(\Gamma,+)$ into the Heisenberg
  group $N_\omega:=(\VgC\times U(1); \cdot_\omega)$ endowed with the law group $\cdot_\omega$ defined by
  $(u;\lambda) \cdot_\omega (\mu;v):= (u+v; \lambda\mu e^{i \Im(H(u,v))})$, by considering the injection map
  $i_\omega: (\Gamma,+) \longrightarrow (\VgC\times U(1), \cdot_\omega)$.
\end{itemize}
\end{remark}

\begin{lemma}\label{lemNC-trivial}
Assume that $\chi$ is a character. Then, the function $\epfo(z,z'):= e^{\frac{{{\nuw}} }2 z^2 + 2i\pi \alpha z}$
belongs to $\Fnug(\VgC)$ and we have
 \begin{equation}\label{norm-epfo}
    \normnu{\epfo}^2 =
\left(\frac{1}{2}\right)^{1/2} \left({\frac{\pi}{\nuw }}\right)^{\frac{2g-1}{2}} e^{2\pi^2 \alpha^2 /\nuw}.
\end{equation}
 \end{lemma}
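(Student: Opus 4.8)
The plan is to check the three defining requirements of membership in $\Fnug(\VgC)$ in turn: holomorphy, the functional equation \eqref{func-eqm1}, and finiteness of the norm; the explicit value \eqref{norm-epfo} will drop out of the last computation. Holomorphy is immediate, since $\epfo(z,z')=e^{\frac{\nuw}2 z^2+2i\pi\alpha z}$ is entire in $(z,z')$. The role of the parameter $\alpha$ is the one subtle point already hidden in the statement: by Lemma \ref{lemSC-trivial} the induced map $\widetilde\chi(m)=\chi(m\omega)$ is a $U(1)$-valued character of $\Z$, hence of the form $\widetilde\chi(m)=e^{2i\pi\alpha m}$ for a real $\alpha$ (determined modulo $\Z$ by $\widetilde\chi(1)=e^{2i\pi\alpha}$), and it is exactly this $\alpha$ that enters $\epfo$.

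For the functional equation I would expand
\begin{equation*}
\epfo(z+m,z')=e^{\frac{\nuw}2(z+m)^2+2i\pi\alpha(z+m)}=\epfo(z,z')\,e^{\nuw m\left(z+\frac{m}2\right)}\,e^{2i\pi\alpha m},
\end{equation*}
and then read off, using $\widetilde\chi(m)=e^{2i\pi\alpha m}$, that this equals $\widetilde\chi(m)e^{\nuw(z+\frac m2)m}\epfo(z,z')$, i.e. \eqref{func-eqm1} holds for every $m\in\Z$. Since $\epfo$ does not depend on $z'$, no further constraint arises from the remaining variables.

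The heart of the matter is the norm. Using the fundamental cell \eqref{FundCell} and the splitting \eqref{splitH}, which together with the normalization \eqref{orth-basis} gives $H(u,u)=\nuw|z|^2+\nuw\sum_{j=2}^g|z_j|^2$, I would write $z=x+iy$ and compute the weighted integrand. The real part of $\frac{\nuw}2 z^2+2i\pi\alpha z$ is $\frac{\nuw}2(x^2-y^2)-2\pi\alpha y$, whence
\begin{equation*}
|\epfo(u)|^2 e^{-H(u,u)}=e^{-2\nuw y^2-4\pi\alpha y}\,e^{-\nuw\sum_{j=2}^g|z_j|^2},
\end{equation*}
the $x$-dependence cancelling exactly, which is the expected reflection of the $\Gamma$-periodicity of the integrand and confirms the integral is well posed. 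The Lebesgue measure then factorizes over the three blocks of \eqref{FundCell}: the $x$-integral over $[0,1]$ contributes $1$; the $y$-integral over $\R$ is a Gaussian which, after completing the square, yields $(\pi/2\nuw)^{1/2}e^{2\pi^2\alpha^2/\nuw}$; and the integral over $\VuoC\cong\C^{g-1}$ splits into $g-1$ planar Gaussians each equal to $\pi/\nuw$. Multiplying the three factors gives
\begin{equation*}
\normnu{\epfo}^2=\left(\frac{\pi}{2\nuw}\right)^{1/2}\left(\frac{\pi}{\nuw}\right)^{g-1}e^{2\pi^2\alpha^2/\nuw}=\left(\frac12\right)^{1/2}\left(\frac{\pi}{\nuw}\right)^{\frac{2g-1}2}e^{2\pi^2\alpha^2/\nuw},
\end{equation*}
which is finite and equals \eqref{norm-epfo}; in particular $\epfo\in\Fnug(\VgC)$.

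The computation is essentially routine; the only points demanding care are the identification of $\alpha$ with the character datum of $\chi$, the verification that the $x$-variable drops out (so that convergence is needed only in the non-compact directions), and the bookkeeping of the Gaussian constants across dimensions so as to land precisely on the exponent $\tfrac{2g-1}2$. The measure factorization rests on reading $\mes(u)$ in the coordinates adapted to the $H$-orthogonal basis $\{\omega,\omega_2,\dots,\omega_g\}$, which is the implicit convention fixed by \eqref{splitH}--\eqref{orth-basis}.
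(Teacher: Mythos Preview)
Your proof is correct and follows essentially the same approach as the paper: identify $\alpha$ from the character $\widetilde\chi(m)=e^{2i\pi\alpha m}$, verify the functional equation by direct expansion of $\epfo(z+m,z')$, and compute the norm by factorizing the integral over $([0,1]\times\R)\times\VuoC$ into three Gaussian pieces via \eqref{splitH}--\eqref{orth-basis}. Your presentation is slightly more explicit about the cancellation of the $x$-dependence in $|\epfo|^2e^{-H(u,u)}$, but the argument is the same.
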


\begin{proof} Since any character $\chi$ is completely determined by
  $$\chi(m\omega)= e^{2i\pi \alpha m} ; \qquad m \in \Z,$$
for certain fixed real number $\alpha$, the holomorphic function
$\epfo(z,z'):= e^{\frac{{{\nuw}} }2 z^2 + 2i\pi \alpha z}$
satisfies \eqref{func-eqm1},
\begin{equation}\label{func-eqm11}
\epfo(z+m,z') = e^{ 2i\pi \alpha m} e^{{{\nuw}} (z+\frac{m}2) m}\epfo(z,z').
\end{equation}
Moreover, starting from \eqref{IntrScal1} and using \eqref{FundCell} and \eqref{splitH}, we can write
\begin{align*}
     \normnu{\epfo}^2= \int_{([0,1]\times \R) \times \VuoC}
     e^{\frac{\nuw}{2} z^2 + 2i\pi \alpha z} \overline{e^{\frac{{{\nuw}} }2 z^2 + 2i\pi \alpha z}}
     e^{- z\overline{z} H(\omega, \omega)}
     e^{- \widetilde{H}(\widetilde{u}, \widetilde{u})}
     \mes(z) \mes(\widetilde{u}).
 \end{align*}
By Fubini theorem and direct computation, keeping in mind \eqref{orth-basis}, it follows
\begin{align*}
     \normnu{\epfo}^2 &=  \left(\int_{[0,1]} dx\right)
     \left(\int_{\R} e^{- 2\nuw y^2 - 4\pi \alpha y}dy \right)
     \left(\int_{\VuoC} e^{- \widetilde{H}(\widetilde{u}, \widetilde{u})}\mes(\widetilde{u})\right)\\
     &=  \left(\int_{\R} e^{- 2\nuw y^2 - 4\pi \alpha y}dy\right)
         \prod_{j=2}^{g} \left(\int_{\R^2} e^{-\nuw (x^{2}_j+y^{2}_j)}dx_jdy_j\right)\\
   &=  \left(\int_{\R} e^{- 2\nuw y^2 - 4\pi \alpha y}dy\right) \left(\int_{\R} e^{-\nuw t^2} dt\right)^{2(g-1)}
 \end{align*}
Now, making use of the explicit formula for the Gaussian integrals
 \begin{equation} \label{gaussIntegral}
  \int_{\R} e^{-a y^2 + by} dy = \left({\frac{\pi}{a}}\right)^{1/2} e^{b^2/4a}
  \end{equation}
 with special values of $a>0$ and $b\in \C$, keeping in mind \eqref{orth-basis}, we obtain
\begin{align*}
\normnu{\epfo}^2 =  \left(\frac{1}{2}\right)^{1/2} \left({\frac{\pi}{\nuw }}\right)^{\frac{2g-1}{2}} e^{2\pi^2 \alpha^2 /\nuw} .
\end{align*}
 Thus $\normnu{\epfo}^2$ is finite. This completes the proof.
 \end{proof}

\begin{remark}
Lemmas \ref{lemSC-trivial} and \ref{lemNC-trivial} can be reworded as: $\Fnug(\VgC)$ is non trivial if and only if $\chi$ is a character.
\end{remark}

 To establish our main result, we  need  the following.

 \begin{proposition}\label{prop-description}
 The holomorphic function $f$ on $\VgC$ satisfies \eqref{func-eqm1} if and only if
 it can be expanded in series as
   \begin{equation}\label{expansion0}
     f(z, z'):= \sum\limits_{n\in\Z} \psi_{n}(z') e^{\frac{{{\nuw}} }2 z^2 + 2i\pi (\alpha + n) z},
     \end{equation}
     where  the coefficients $\psi_{n} (z')$ are holomorphic functions in $ z' \in \VuoC $.
\end{proposition}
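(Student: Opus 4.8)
The plan is to prove both implications, with essentially all the substance lying in the necessity (``only if'') direction. For sufficiency, I would simply check that each building block $e^{\frac{\nuw}2 z^2 + 2i\pi (\alpha + n) z}$ transforms correctly under $z\mapsto z+m$. A one-line computation shows it picks up the factor $e^{\nuw(z+\frac m2)m}\,e^{2i\pi(\alpha+n)m}$; since $n,m\in\Z$ forces $e^{2i\pi n m}=1$, this collapses to $\widetilde{\chi}(m)\,e^{\nuw(z+\frac m2)m}$, which is exactly the cocycle appearing in \eqref{func-eqm1}. Because the coefficients $\psi_n(z')$ do not depend on $z$, each term of \eqref{expansion0}, and hence the whole series, satisfies \eqref{func-eqm1}.

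For the converse the key device is to strip off the automorphy factor. I would set $g(z,z') := f(z,z')\, e^{-\frac{\nuw}2 z^2 - 2i\pi \alpha z}$, which is holomorphic on $\VgC$ as a product of holomorphic functions. Substituting \eqref{func-eqm1} together with $\widetilde{\chi}(m)=e^{2i\pi\alpha m}$, the Gaussian exponentials $e^{\nuw(z+\frac m2)m}$ and $e^{-\frac{\nuw}2(z+m)^2}$ cancel against one another, and the character contribution $e^{2i\pi\alpha m}\,e^{-2i\pi\alpha m}$ cancels as well, leaving $g(z+m,z')=g(z,z')$ for every $m\in\Z$. Thus $g$ is $1$-periodic in the variable $z$ for each fixed $z'$.

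I would then invoke the Fourier--Laurent expansion of a $1$-periodic entire function. For fixed $z'$, the map $z\mapsto g(z,z')$ factors through $q=e^{2i\pi z}$ to a holomorphic function on $\C^{\ast}$, and therefore admits a Laurent expansion $g(z,z')=\sum_{n\in\Z}\psi_n(z')\,e^{2i\pi n z}$, with coefficients represented by the contour integral $\psi_n(z') = \int_0^1 g(x+iy_0,z')\,e^{-2i\pi n(x+iy_0)}\,dx$, independent of the chosen $y_0\in\R$ by Cauchy's theorem. Multiplying back by $e^{\frac{\nuw}2 z^2 + 2i\pi\alpha z}$ recovers precisely \eqref{expansion0}.

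The step demanding the most care, and the main obstacle, is verifying that each coefficient $\psi_n$ is genuinely holomorphic in $z'\in\VuoC$ and that the resulting series converges locally uniformly in $(z,z')$. Holomorphy follows from the integral representation: since $g$ is jointly holomorphic in $(z,z')$ and the integration runs over a compact segment, one may differentiate under the integral sign (or apply Morera's theorem together with Fubini on the compact contour) to conclude that $\psi_n$ satisfies the Cauchy--Riemann equations in each component of $z'$. Local uniform convergence of the Laurent series in $q$ then transfers to \eqref{expansion0}, confirming that the series represents the given holomorphic $f$ and closing the argument.
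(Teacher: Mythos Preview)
Your proposal is correct and follows essentially the same route as the paper: strip off the factor $e^{\frac{\nuw}{2}z^2+2i\pi\alpha z}$ to obtain a $\Z$-periodic holomorphic function in $z$, Fourier-expand it, and read off the holomorphy of the coefficients $\psi_n(z')$ from their integral representation. If anything, your write-up is slightly more complete, since you explicitly verify the ``if'' direction and are more careful about the holomorphy-in-$z'$ and convergence arguments than the paper's proof.
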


\begin{proof}
Let $f$ be a holomorphic function displaying \eqref{func-eqm1} and let $h$ be a function defined by
\begin{equation}\label{expans}
h(z,z')= e^{ \frac{-\nuw}{2} z^2-2i\pi \alpha z} f(z,z'),
\end{equation}
we get
\begin{align*}
h(z+m,z')&=e^{ \frac{-\nuw}{2} (z+m)^2-2i\pi \alpha (z+m)} f(z+m,z') \\
         &=e^{ \frac{-\nuw}{2} z^2-2i\pi \alpha z} f(z,z')= h(z,z'),
\end{align*}
for every $(z,z')\in\C\times\C^{g-1}$, and for every $m\in\Z$ .
 Then, for every fixed $z'$, the  function $$ h_{z'}(z):=h(z,z') ,\quad\quad z\in\C$$
 is a $\Z$-simply periodic function with respect to $z$ and in the $x$-direction,  with $z=x+iy$; $x,y\in\R$. Therefore, it can be expanded as (\cite{Bargmann1961,Jones-Singerman1987}):
\begin{equation}\label{func}
    h_{z'}(z) =  \sum\limits_{n\in\Z} \psi_n(z') e^{ 2i\pi n z} .
\end{equation}
The involved series converge absolutely and uniformly on compact subsets of $\C$. The Fourier coefficients $\psi_n(z')$; $n\in\Z$, are given by
\begin{equation}\label{coefficient}
 \psi_n(z') = \int_0^1 h_{z'}(x) e^{-2i\pi n x} dx .
\end{equation}
Based on \ref{coefficient}, the functions $ z' \longmapsto \psi_n(z')$ are holomorphic on the whole $\VuoC$ for the integrand function.
 Finally, using \eqref{expans} and \eqref{func}, we get that
     \begin{align}\label{expansion01}
     f(z, z') =\sum\limits_{n\in\Z} \psi_n(z') e^{\frac{{{\nuw}} }2 z^2 + 2i\pi (\alpha + n) z}.
     \end{align}
\end{proof}

Furthermore, we have the following

\begin{proposition}\label{orthset}
The set of functions
 \begin{equation}\label{def-ena}
 \epf(z,z') :=  \epfon (z,z') = e^{\frac{\nuw}2 z^2 + 2i\pi (\alpha + n) z}; \quad n\in\Z,
 \end{equation}
 constitutes an orthogonal system in $\Fnug(\VgC)$ with
    \begin{equation}\label{norm-def3}
    \normnu{\epf}^2 = \left(\dfrac{1}{2}\right)^{1/2} \left(\dfrac{\pi}{\nuw}\right)^{\frac{2g-1}2} e^{\frac{2\pi^2}{{{\nuw}} }(n+\alpha)^2}.
    \end{equation}
\end{proposition}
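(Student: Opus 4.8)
The plan is to derive both claims from Lemma~\ref{lemNC-trivial} combined with the elementary orthogonality of the exponential characters $x\mapsto e^{2i\pi n x}$ on the interval $[0,1]$. First I would note that $\epf$ coincides with the function $\epfo$ of Lemma~\ref{lemNC-trivial} in which $\alpha$ has been replaced by $\alpha+n$. Because $n,m\in\Z$ give $e^{2i\pi n m}=1$, the shifted exponential $\epfon$ satisfies the very same functional equation \eqref{func-eqm1} attached to the character $\widetilde{\chi}(m)=e^{2i\pi\alpha m}$; hence each $\epf$ again belongs to $\Fnug(\VgC)$, and its square norm is read off from \eqref{norm-epfo} by the substitution $\alpha\mapsto\alpha+n$, which is exactly the stated value \eqref{norm-def3}.

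It remains to prove orthogonality, so next I would compute $\scalnue{\epf,\eqf}$ for $n\neq n'$ directly from the definition \eqref{IntrScal1}, using the fundamental cell \eqref{FundCell} and the splitting \eqref{splitH} of $H$. Since neither factor depends on the transverse variable $\zi\in\VuoC$, Fubini's theorem factors the integral as a finite Gaussian integral over $\VuoC$ carrying $e^{-\widetilde{H}(\widetilde{u},\widetilde{u})}$, times an integral over $z\in[0,1]\times\R$. Writing $z=x+iy$, the Gaussian factors $e^{\frac{\nuw}2 z^2}\,\overline{e^{\frac{\nuw}2 z^2}}\,e^{-\nuw z\overline{z}}$ collapse to $e^{-2\nuw y^2}$, while the two linear phases combine into $e^{2i\pi(n-n')x}e^{-2\pi(2\alpha+n+n')y}$; the $z$-integral therefore separates into $\left(\int_0^1 e^{2i\pi(n-n')x}\,dx\right)\left(\int_\R e^{-2\nuw y^2-2\pi(2\alpha+n+n')y}\,dy\right)$.

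The decisive observation is that the $x$-integral is $\int_0^1 e^{2i\pi(n-n')x}\,dx=\delta_{n,n'}$: for $n\neq n'$ the frequency $n-n'$ is a nonzero integer, the integral vanishes, and hence $\scalnue{\epf,\eqf}=0$, which is the desired orthogonality; for $n=n'$ it equals $1$ and the surviving Gaussian integrals reproduce precisely the norm computation of Lemma~\ref{lemNC-trivial}. I do not anticipate a real obstacle here: the only points needing care are the appeal to Fubini---legitimate because the integrand is absolutely integrable, as is already guaranteed by the finiteness of the norms in Lemma~\ref{lemNC-trivial}---and the bookkeeping that isolates the pure character $e^{2i\pi(n-n')x}$ in the periodic variable. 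It is exactly this Fourier orthogonality on $[0,1]$ that produces the off-diagonal vanishing, the same mechanism underlying the Fourier expansion \eqref{func} in Proposition~\ref{prop-description}.
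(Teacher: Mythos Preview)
Your proposal is correct and matches the paper's own proof essentially line for line: both obtain membership and the norm formula from Lemma~\ref{lemNC-trivial} via the substitution $\alpha\mapsto\alpha+n$, and both establish orthogonality by factoring the integral over $([0,1]\times\R)\times\VuoC$ with Fubini and isolating the decisive $x$-integral $\int_0^1 e^{2i\pi(n-n')x}\,dx=\delta_{n,n'}$. Even the intermediate expression for the $y$-integrand, $e^{-2\nuw y^2-2\pi(2\alpha+n+n')y}$, coincides with the paper's computation.
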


\begin{proof}
 In view of Lemma \ref{lemNC-trivial}, we see that $\epf(z,z') :=  \epfon (z,z') \in \Fnug(\VgC)$ and $\normnu{\epf}^2= \normnu{\epfon}^2$. What is to prove is the orthogonality, indeed by proceeding as in the proof of Lemma \ref{lemNC-trivial}, we get
 %Fubini Theorem, we have
 \begin{align}
\scalnue{\epf, \eqf}&= \int_{([0,1]\times \R) \times \VuoC} \epf(z,z') \overline{\eqf(z,z')}
                        e^{- z\overline{z}H(\omega,\omega)} e^{- \widetilde{H}(\widetilde{u}, \widetilde{u})} \mes(z) \mes(\widetilde{u})  \nonumber \\
                  &= \left(\int_0^1e^{ 2i\pi(n-n') x} dx\right) \left(\int_{\R} e^{-2{{\nuw}}  y^2 - 2\pi (2\alpha +n+n')y} dy\right)
                   \left(\int_{\R} e^{-\nuw  t^2} dt\right)^{2(g-1)} \nonumber\\
                   &= \delta_{n,n'}  \normnu{\epfon}^2. \label{scalarnorm}
\end{align}
Whenever, $\{\epf; \, n\in\Z\}$ is an orthogonal set in $\Fnug(\VgC)$. This completes the proof.
\end{proof}

 \section{Main results}
The first main result of this paper is Theorem 3.1 below which gives concrete description of the space $\Fnug(\VgC)$. Precisely, we assert

\begin{theorem}\label{ThmExpansion}
A function $f$  belongs to $\Fnug(\VgC)$ if and only if it can be written as
\begin{equation}\label{expansion}
     f(z,z'):= \sum\limits_{n\in\Z} \psi_n(z') e^{\frac{{{\nuw}} }2 z^2 + 2i\pi (\alpha + n) z},
     \end{equation}
  where the series converge uniformly on each thinner strip. The functions  $\psi_n$ are holomorphic on $\VuoC$ and satisfying the growth condition
      \begin{align}\label{gcond}
       \left(\dfrac{\pi}{2{{\nuw}} }\right)^{1/2}\sum\limits_{n\in\Z}e^{\frac{2\pi^2}{\nuw}(n+\alpha)^2}
       \|\psi_n\|_{L^2(\VuoC, \widetilde{H})}^{2}  <+\infty ,
       \end{align}
      where $\|\psi_n\|_{L^2(\VuoC, \widetilde{H})}^{2}$ denotes
      the square norm of $\psi_n$ in the Hilbert space $L^2(\VuoC, e^{- \widetilde{H}(\widetilde{u},\widetilde{u})}\mes(\widetilde{u}))$.
\end{theorem}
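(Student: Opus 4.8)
The plan is to observe that the equivalence between holomorphy together with the functional equation \eqref{func-eqm1} and the series representation \eqref{expansion} with holomorphic coefficients $\psi_n$ is already furnished by Proposition \ref{prop-description}; hence the only genuinely new content is the equivalence between the finiteness $\normnu{f}^2<+\infty$ and the growth condition \eqref{gcond}. To handle both implications at once I would establish the exact norm identity
\begin{equation*}
\normnu{f}^2 = \left(\frac{\pi}{2\nuw}\right)^{1/2}\sum_{n\in\Z} e^{\frac{2\pi^2}{\nuw}(n+\alpha)^2}\,\|\psi_n\|_{L^2(\VuoC,\widetilde{H})}^2,
\end{equation*}
valid as an equality in $[0,+\infty]$, so that $\normnu{f}^2$ is finite precisely when the right-hand side is.

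To prove this identity I would first split the fundamental cell via \eqref{FundCell} and the hermitian form via \eqref{splitH}, recalling $\nuw=H(\omega,\omega)$ from \eqref{nu}. Writing $z=x+iy$ and setting $h(z,z')=e^{-\frac{\nuw}{2}z^2-2i\pi\alpha z}f(z,z')$ as in the proof of Proposition \ref{prop-description}, a direct computation gives the pointwise relation $|f(z,z')|^2 e^{-\nuw|z|^2}=|h(z,z')|^2 e^{-2\nuw y^2-4\pi\alpha y}$, the essential feature being that the Gaussian prefactor no longer depends on $x$. For fixed $z'$ and fixed $y$, the map $x\mapsto h(x+iy,z')$ is $1$-periodic with Fourier expansion $\sum_n[\psi_n(z')e^{-2\pi ny}]e^{2i\pi nx}$ read off from \eqref{func}, so Parseval's identity yields
\begin{equation*}
\int_0^1 |h(x+iy,z')|^2\,dx = \sum_{n\in\Z}|\psi_n(z')|^2 e^{-4\pi ny}.
\end{equation*}
This is the clean substitute for the cross-term cancellation underlying the orthogonality of Proposition \ref{orthset}, now made manifest pointwise in $z'$.

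Integrating next over $y\in\R$, where Tonelli's theorem applies since every summand is nonnegative, and evaluating the resulting Gaussian integrals by \eqref{gaussIntegral} with $a=2\nuw$ and $b=-4\pi(\alpha+n)$, I obtain $\left(\frac{\pi}{2\nuw}\right)^{1/2}\sum_n|\psi_n(z')|^2 e^{\frac{2\pi^2}{\nuw}(n+\alpha)^2}$; a second appeal to Tonelli over $\VuoC$ against the weight $e^{-\widetilde{H}(\widetilde{u},\widetilde{u})}$ then converts the inner factors into $\|\psi_n\|_{L^2(\VuoC,\widetilde{H})}^2$ and produces the announced identity. Because every interchange of sum and integral rests solely on nonnegativity, the identity is an equality in $[0,+\infty]$, and the forward implication $f\in\Fnug(\VgC)\Rightarrow$ \eqref{gcond} follows immediately.

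The step I expect to be the main obstacle is the converse, where one starts from holomorphic functions $\psi_n$ on $\VuoC$ subject to \eqref{gcond} and must verify that the series \eqref{expansion} defines a holomorphic function before the norm identity can even be invoked. Here I would use that the classical Bargmann space $\mathcal{B}^{2}_H(\VuoC)$ of \eqref{Bargmann} has bounded point evaluations, so that $|\psi_n(z')|$ is dominated, locally uniformly in $z'$, by a constant multiple of $\|\psi_n\|_{L^2(\VuoC,\widetilde{H})}$; the growth condition \eqref{gcond} forces a Gaussian decay $\|\psi_n\|_{L^2(\VuoC,\widetilde{H})}=O\big(e^{-\pi^2(n+\alpha)^2/\nuw}\big)$ which dominates the modulus $e^{-2\pi ny}\le e^{2\pi|n|R}$ of the Fourier factors $e^{2i\pi nz}$ of $h$ on every horizontal strip $\{|\Im z|\le R\}$. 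This gives normal convergence of the series for $h$ on each such strip, hence the announced uniform convergence on the thinner strips and local uniform convergence of \eqref{expansion}, so that $f$ is holomorphic and, by Proposition \ref{prop-description}, satisfies \eqref{func-eqm1}. The norm identity established above then shows $\normnu{f}^2<+\infty$, completing the proof.
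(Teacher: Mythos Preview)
Your proposal is correct and follows essentially the same route as the paper: invoke Proposition~\ref{prop-description} for the series form, rewrite $|f|^2 e^{-\nuw|z|^2}$ via $h(z,z')=e^{-\frac{\nuw}{2}z^2-2i\pi\alpha z}f(z,z')$, apply Parseval in $x$ to the $1$-periodic function $x\mapsto h(x+iy,z')$, and then integrate the resulting Gaussian in $y$ using \eqref{gaussIntegral}. If anything you are more careful than the paper, which treats the interchanges via Fubini and does not spell out the converse direction; your use of Tonelli to get an identity in $[0,+\infty]$ and your argument for normal convergence on strips (via bounded point evaluations in $\mathcal{B}^2_H(\VuoC)$ and the Gaussian decay of $\|\psi_n\|$) fill gaps the paper leaves implicit.
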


\begin{proof}
We make use of Proposition \ref{prop-description}, to expand any holomorphic function $f$ satisfying \eqref{func-eqm11} as
       \begin{align*}
        f(z,z') %:= \sum\limits_{n\in\Z} \psi_n(z_2,\cdots,z_g) e^{\frac{{{\nuw}} }2 z_1^2 + 2i\pi (\alpha + n) z_1}
        &= e^{\frac{\nuw}2 z^2 + 2i\pi\alpha  z} \sum\limits_{n\in\Z} e^{ -2\pi n y}\psi_n(z') e^{ 2i\pi n x}\\
        &= e^{\frac{\nuw}2 z^2 + 2i\pi\alpha  z} \psi_{(y;z')}(x) ,
       \end{align*}
where $\psi_n$ are holomorphic functions on $\VuoC$, and where we have set
       \begin{align}\label{perio-fct}
        \psi_{(y;z')}(x) = \sum\limits_{n\in\Z} e^{ -2\pi n y}\psi_n(z') e^{ 2i\pi n x}.
       \end{align}
  So that
\begin{align*}
\normnu{f}^2 &=\int_{[0,1]\times\R\times \VuoC} |f(u)|^2 e^{- H(u,u)} \mes(u) \\
             &=\int_{[0,1]\times\R\times \VuoC} e^{-2\nuw y^2 -4\pi \alpha y}  \left|\psi_{(y;z')}(x) \right|^2
             e^{- \widetilde{H}(\widetilde{u}, \widetilde{u})} dx dy \mes(\widetilde{u}).
\end{align*}
Thus, by Fubini theorem we obtain
\begin{align}\label{Fubinin}
\normnu{f}^2  &= \int_{\R\times \VuoC}   e^{-2\nuw y^2 -4\pi \alpha y}
    \left(  \int_{[0,1]} \left|\psi_{(y;z')}(x)\right|^2 dx\right)
    e^{- \widetilde{H}(\widetilde{u}, \widetilde{u})} dy\mes(\widetilde{u}).
\end{align}
Whence, if $\normnu{f}^2$ is finite, then $\int_{[0,1]} \left|\psi_{(y;z')}(x)\right|^2 dx$ is finite
for every fixed $y\in \R$ and $ z'\in \VuoC$. Therefore, the periodic function
$x \longrightarrow \psi_{(y;z')}(x)$
 belongs to the Hilbert space $L^2([0,1];dx)$, where the series converges uniformly on $[0,1]$.
By applying the Parseval identity, keeping in mind \eqref{perio-fct}, we obtain
\begin{align*}
\int_{[0,1]} \left|\psi_{(y;z')}(x)\right|^2 dx =  \sum\limits_{n\in\Z} e^{ -4\pi n y}\left| \psi_n(z') \right|^2.
\end{align*}
Inserting this in \eqref{Fubinin} yields
\begin{align*}
\normnu{f}^2  &=  \sum\limits_{n\in\Z}  \left(\int_{\R}  e^{-2\nuw y^2 -4\pi (\alpha+n) y} dy\right)
\left(\int_{\VuoC} \left| \psi_n(z') \right|^2 e^{- \widetilde{H}(\widetilde{u}, \widetilde{u})} \mes(\widetilde{u})\right).
\end{align*}
The value of the first integral in the right hand side of the last equality follows from \eqref{gaussIntegral}. The second one leads to the square norm of $\psi_n$ in the Hilbert space
$L^2(\VuoC, e^{- \widetilde{H}(\widetilde{u},\widetilde{u})}\mes(\widetilde{u}))$.
Thence,
\begin{align*}
\normnu{f}^2  &=  \left({\frac{\pi}{2\nuw }}\right)^{1/2} \sum\limits_{n\in\Z}   e^{\frac{2\pi^2}{\nuw} (\alpha+n)^2 }
\|\psi_n\|_{L^2(\VuoC, \widetilde{H})}^{2} .
\end{align*}
This completes the proof.
\end{proof}

\begin{remark} \label{Bargm}
In view of  the growth condition \eqref{gcond}, it is clear that the holomorphic function $z'\longmapsto \psi_n(z'),n \in \Z,$ belongs to the usual Bargmann space
$$\mathcal{B}^{2,{{\nuw}} }(\VuoC)=\left\{\mbox{  h holomorphic on \,} \VuoC; \, \int_{\VuoC} |h(\widetilde{u})|^2
e^{-\widetilde{H}(\widetilde{u}, \widetilde{u})} \mes(\widetilde{u})  <+\infty \right\} $$
and therefore can be expanded as
$$\psi_n(z')=\sum\limits_{\mathbf{k}\in (\Z^+)^{g-1}} a_{n,\mathbf{k}} z'^{\mathbf{k}} ; \qquad a_{n,\mathbf{k}}\in\C $$
uniformly convergent on compact sets of $\VuoC$. Here $z'^{\mathbf{k}}=z_2^{k_2}z_3^{k_3}\cdots z_{g}^{k_{g}}$ for $\mathbf{k}=(k_2, \cdots,k_{g}) \in(\Z^+)^{g-1}$.
Thence, it follows from Theorem \eqref{ThmExpansion} that any function $f$  belonging to $\Fnug(\VgC)$ has the following expansion series
 \begin{equation}\label{expansionz}
  f(z,z'):= \sum\limits_{n\in\Z} \sum\limits_{\mathbf{k}\in (\Z^+)^{g-1}}
  a_{n,\mathbf{k}}  e^{\frac{{{\nuw}} }2 z^2 + 2i\pi (\alpha + n) z} z'^{\mathbf{k}}.
     \end{equation}
\end{remark}
For every $f\in\Fnug(\VgC) $, and every $(z,z')\in\C\times\C^{g-1}$, we have
$$  |f(z,z') |\leq\|f\|_{\Z\omega,{{\nuw}} }\left(\dfrac{2\nuw} {\pi}\right)^{1/4} e^{\frac{\nuw}{4}  (z^2+\overline{z}^2)}\left( \sum\limits_{n\in\Z} e^{\frac{-2\pi^2}{{{\nuw}} }(n+\alpha)^2+ 2i\pi (\alpha + n)( z-\overline{z})}\frac{|\psi_n(z')|}{\|\psi_n\|_{L^2(\VuoC, \widetilde{H})}^{2}}\right)^{\frac{1}{2}}. $$
It should be understood that the summation is over $n\in \Z$ such that $\psi_n$ is not zero.

This follows by applying appropriately the Cauchy Schwartz inegality to the expansion series of $f$ given throug \eqref{expansion01}. So that for every compact set $K$ of $\C\times\C^{g-1}$, there is a constant  $C_{K}\geq 0$ such that
$$ |f(z,z') | \leq C_{K}\|f\|_{\Z\omega,{{\nuw}} } \quad\quad; (z,z')\in K .$$
This can be used in a classical way to prove the $\Fnug(\VgC)$ is a Hilbert space .

Moreover, we assert

\begin{theorem}  \label{ThmbasisO}
The $(\Z\omega, \chi)$-theta Fock-Bargmann space $\Fnug(\VgC)$ is a reproducing kernel Hilbert space and
%Let $\epf$ be as given by \eqref{def-ena}. Then,
the set of functions
 \begin{equation}\label{basis}
 e^{\alpha,{\nuw} }_{n,\mathbf{k}}(z,z')=: %\epf(z_1,z_2,\cdots,z_g)
 e^{\frac{\nuw}2 z^2 + 2i\pi (\alpha + n) z} z'^{\mathbf{k}}
 \end{equation}
 for varying $n\in\Z$ and varying multi-index  $\mathbf{k}=(k_{2},  \cdots ,  k_{g})$ of positive integers,
 constitutes an orthogonal basis of $\Fnug(\VgC)$ with
 \begin{align}\label{norm-basiss}
\normnu{e^{\alpha,{{\nuw}} }_{n,\mathbf{k}}}^2 =  \sqrt{2\pi\nuw}  \left(\dfrac{1}{2\nuw } \right)^{g}
\left(\dfrac{1}{\nuw } \right)^{|\mathbf{k}|}\mathbf{k}! e^{\frac{2\pi^2}{{{\nuw}} }(n+\alpha)^2},
\end{align}
where $|\mathbf{k}|= k_2+k_3 + \cdots + k_{g}$, and $\mathbf{k}!= k_2!k_3! \cdots k_{g}!$.
\end{theorem}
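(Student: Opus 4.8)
The plan is to reduce the whole statement to the one-variable results already in hand, namely Proposition \ref{orthset} and Theorem \ref{ThmExpansion}, together with the classical moments of the Bargmann space on $\VuoC$, and to obtain the reproducing-kernel property from the pointwise estimate established just before the statement. I would first check membership: each $e^{\alpha,\nuw}_{n,\mathbf k}$ is holomorphic and satisfies \eqref{func-eqm1}, because $z'^{\mathbf k}$ is unaffected by the $z$-translation while $\epf$ already obeys the functional equation, and it has finite norm by the computation below; hence $e^{\alpha,\nuw}_{n,\mathbf k}\in\Fnug(\VgC)$.

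For orthogonality and the norm \eqref{norm-basiss} I would write $u=z\omega+\widetilde u$ and use the splitting \eqref{splitH} of $H$ with the description \eqref{FundCell} of the fundamental cell, so that the integral defining $\scalnue{e^{\alpha,\nuw}_{n,\mathbf k},e^{\alpha,\nuw}_{n',\mathbf k'}}$ factorises, since $e^{\frac{\nuw}2 z^2+2i\pi(\alpha+n)z}$ depends only on $z$ and $z'^{\mathbf k}$ only on $z'$. The $z$-integration reproduces $\delta_{n,n'}$ and the Gaussian factor $\sqrt{\pi/(2\nuw)}\,e^{2\pi^2(n+\alpha)^2/\nuw}$ exactly as in the proof of Proposition \ref{orthset}, via \eqref{gaussIntegral}. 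The $z'$-integral splits, by \eqref{orth-basis}, into a product over $j=2,\dots,g$ of one-dimensional Bargmann moments $\int_\C z_j^{k_j}\overline{z_j}^{k_j'}e^{-\nuw|z_j|^2}\,dx_j\,dy_j$, each equal to $\delta_{k_j,k_j'}\,\pi k_j!/\nuw^{\,k_j+1}$ by passing to polar coordinates. Multiplying gives $\delta_{n,n'}\delta_{\mathbf k,\mathbf k'}$, establishing orthogonality, and collecting the constants yields the stated value of $\normnu{e^{\alpha,\nuw}_{n,\mathbf k}}^2$.

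Next I would prove completeness, i.e. that this orthogonal system is a basis. By Theorem \ref{ThmExpansion} every $f\in\Fnug(\VgC)$ is the series \eqref{expansion} with $\psi_n$ holomorphic on $\VuoC$, and by Remark \ref{Bargm} each $\psi_n$ lies in $\mathcal B^{2,\nuw}(\VuoC)$ and expands in the monomials $z'^{\mathbf k}$, giving the double expansion \eqref{expansionz}. The point is to upgrade this to a Parseval identity: the growth condition \eqref{gcond} of Theorem \ref{ThmExpansion} is already a Parseval identity in $n$, with weights $e^{2\pi^2(n+\alpha)^2/\nuw}\|\psi_n\|_{L^2(\VuoC,\widetilde H)}^2$, while for each fixed $n$ the quantity $\|\psi_n\|_{L^2(\VuoC,\widetilde H)}^2$ is the Parseval sum of the $|a_{n,\mathbf k}|^2$ against the monomial norms. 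Chaining the two identities yields $\normnu{f}^2=\sum_{n,\mathbf k}|a_{n,\mathbf k}|^2\normnu{e^{\alpha,\nuw}_{n,\mathbf k}}^2$, so the partial sums converge to $f$ in norm and no nonzero $f$ is orthogonal to every $e^{\alpha,\nuw}_{n,\mathbf k}$; hence the system is a maximal orthogonal basis.

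Finally, for the reproducing-kernel assertion I would invoke the pointwise bound $|f(z,z')|\le C_K\|f\|_{\Z\omega,\nuw}$ on compacta proved immediately before the theorem: it says that evaluation at a point is a bounded linear functional on the Hilbert space $\Fnug(\VgC)$, so the Riesz representation theorem furnishes, for each $v$, a kernel element $K(\cdot,v)\in\Fnug(\VgC)$ with $f(v)=\scalnue{f,K(\cdot,v)}$, and relative to the orthogonal basis one has $K(u,v)=\sum_{n,\mathbf k} e^{\alpha,\nuw}_{n,\mathbf k}(u)\overline{e^{\alpha,\nuw}_{n,\mathbf k}(v)}/\normnu{e^{\alpha,\nuw}_{n,\mathbf k}}^2$, the same bound guaranteeing locally uniform convergence. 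I expect the genuine obstacle to lie in the completeness step, since \eqref{expansionz} is a priori only locally uniformly convergent: turning it into norm convergence and rigorously chaining the two Parseval identities requires justifying the interchange of the $n$-sum with the $\mathbf k$-sum and controlling the weighted $\ell^2$ tails, whereas the orthogonality, the norm evaluation and the reproducing-kernel property are essentially routine given the earlier results.
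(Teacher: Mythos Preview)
Your proposal is correct and closely parallels the paper's argument for membership, orthogonality, and the norm computation: both factorise the integral via \eqref{splitH}, \eqref{FundCell}, \eqref{orth-basis}, invoke Proposition~\ref{orthset} for the $z$-factor, and use the standard Bargmann moments for the $z'$-factor. For completeness you diverge slightly: the paper computes $\scalnue{f,e^{\alpha,\nuw}_{n',\mathbf k'}}=a_{n',\mathbf k'}\normnu{e^{\alpha,\nuw}_{n',\mathbf k'}}^2$ directly from the expansion \eqref{expansionz} (interchanging the double sum with the inner product, relying on locally uniform convergence) and concludes that the orthogonal complement of the span is $\{0\}$, whereas you chain the Parseval identity built into Theorem~\ref{ThmExpansion} with the Bargmann-space Parseval identity for each $\psi_n$ to obtain $\normnu{f}^2=\sum_{n,\mathbf k}|a_{n,\mathbf k}|^2\normnu{e^{\alpha,\nuw}_{n,\mathbf k}}^2$. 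Your route actually sidesteps the interchange issue you flag as the genuine obstacle, since all terms are nonnegative and Tonelli applies; the paper's route is shorter on the page but leaves that interchange implicit. The reproducing-kernel assertion is handled identically in spirit (pointwise bound plus Riesz), with the explicit kernel formula deferred to a corollary in the paper.
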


\begin{proof}
Note first that the functions $e^{\alpha,{\nuw} }_{n,\mathbf{k}}$ belong to $\Fnug(\VgC)$ according to Proposition \ref{orthset}.
Orthogonality of $e^{\alpha,{\nuw} }_{n,\mathbf{k}}$ follows from the orthogonality of $\epf$. More precisely,
doing so as in Proposition \ref{orthset}, we get
\begin{align}
\scalnue{e^{\alpha,{{\nuw}} }_{n,\mathbf{k}},e^{\alpha,{{\nuw}} }_{n',\mathbf{k}'}}
& =  \delta_{n,n'} \left(\dfrac{\pi}{2{{\nuw}} }\right)^{1/2}e^{\frac{2\pi^2}{{{\nuw}} }(n+\alpha)^2}  \left(\int_{\C^{g-1}} {z}^{\mathbf{k}} \overline{z}^{\mathbf{k'}}  e^{- \widetilde{H}(\widetilde{u}, \widetilde{u})}  \mes(\widetilde{u})\right).  \nonumber
\end{align}
Now, using the well established fact
\begin{align*}
\int_{\C^{g-1}} {z}^{\mathbf{k}} \overline{z}^{\mathbf{k'}}  e^{- \widetilde{H}(\widetilde{u}, \widetilde{u})}  \mes(\widetilde{u})=
\left(\dfrac{1}{2\nuw } \right)^{g-1} \left(\dfrac{1}{\nuw } \right)^{|\mathbf{k}|} \mathbf{k}! \delta_{\mathbf{k},\mathbf{k}'} 
\end{align*}
it follows
\begin{align*}
\scalnue{e^{\alpha,{{\nuw}} }_{n,\mathbf{k}},e^{\alpha,{{\nuw}} }_{n',\mathbf{k}'}}
=  \left(\dfrac{\pi}{2{{\nuw}} }\right)^{1/2} e^{\frac{2\pi^2}{{{\nuw}} }(n+\alpha)^2}
\left(\dfrac{1}{2\nuw } \right)^{g-1} \left(\dfrac{1}{\nuw } \right)^{|\mathbf{k}|} \mathbf{k}! \delta_{n,n'} \delta_{\mathbf{k},\mathbf{k}'} .
\end{align*}
Then, we get
\begin{align}
\normnu{e^{\alpha,{{\nuw}} }_{n,\mathbf{k}}}^2 =  \sqrt{2\pi\nuw}  \left(\dfrac{1}{2\nuw } \right)^{g}
\left(\dfrac{1}{\nuw } \right)^{|\mathbf{k}|}\mathbf{k}! e^{\frac{2\pi^2}{{{\nuw}} }(n+\alpha)^2},
\end{align}
Let $f\in\Fnug(\VgC)$ ,
in view of Theorem \ref{ThmExpansion} and Remark \eqref{Bargm}, we have
 \begin{equation}
  f(z,z'):= \sum\limits_{n\in\Z} \sum\limits_{\mathbf{k}\in (\Z^+)^{g-1}}
  a_{n,\mathbf{k}}  e^{\alpha,{{\nuw}} }_{n,\mathbf{k}}(z,z')
\end{equation}
 Furthermore, the series converge uniformly on compact sets of $\VgC$.  So that
  %%%%%%%%%%%%%%%%%%%%%%%%%%%%%%%%%%%%%%%%%%%%%%%%%%%%%%%%%%%%%%%%%%%%%%%%%%%%%%%%%%%%%%%%%%%%%%%%%%%%%%%%%%%%%%%%%%%%%%%%%%%%
 \begin{align*}
\scalnue{f,e^{\alpha,{{\nuw}} }_{n',\mathbf{k'}}}
=\sum\limits_{n\in\Z; \mathbf{k} \in(\Z^+)^{g-1}}a_{n,\mathbf{k}}\scalnue{e^{\alpha,{{\nuw}} }_{n,\mathbf{k}},e^{\alpha,{{\nuw}} }_{n',\mathbf{k'}}} =a_{n',\mathbf{k'}} \normnu{e^{\alpha,{{\nuw}} }_{n',\mathbf{k'}}}^2
 \end{align*}
 by orthogonality of $e^{\alpha,{{\nuw}} }_{n,\mathbf{k}}$.
 Thus, for $f$ belonging to the orthogonal of $Span\{e^{\alpha,{{\nuw}} }_{n,\mathbf{k}} ,n\in\Z \}$ in the Hilbert space  $\Fnug(\VgC)$, we have
 $\scalnue{f,e^{\alpha,{{\nuw}} }_{n',\mathbf{k'}}}=0$ and therefore $a_{n',\mathbf{k'}}=0$ for every
 $n'\in\Z$ and $\mathbf{k'} \in(\Z^+)^{g-1}$. This shows that $f$ is identically zero and so
 the orthogonal of $Span\{e^{\alpha,{{\nuw}} }_{n,\mathbf{k}} ,n\in\Z \}$ reduces to zero. This completes the proof.
%%%%%%%%%%%%%%%%%%%%%%%%%%%%%%%%%%%%%%%%%%%%%%%%%%%%%%%%%%%%%%%%%%%%%%%%%%%%%%%%%%%%%%%%%%%%%%%%%%%%%%%%%%%%%%%%%%%%%%%%%%%%ù
\end{proof}
As immediate consequence of provious theorem, we have
\begin{corollary}
The reproducing kernel $K(u,v)$ of $\Fnug(\VgC)$ is given by in terms of the theta function $\theta_{\alpha,0}$ in \eqref{RiemannTheta} as
\begin{equation}\label{Kernel-explicit}
 K(u,v) =  \left(\frac 1{2\pi\nuw}\right)^{1/2} \left({2\nuw}\right)^{-g} e^{\frac{\nuw}2 (z^2 + \overline{w}^2)}\theta_{\alpha,0} \left(z-\overline{w} \bigg |  {2i\pi}\right) e^{ \widetilde{H}(\widetilde{u}, \widetilde{v})},
 \end{equation}
   where  $u= z\omega+\widetilde{u}$ and $v= w\omega+\widetilde{v}.$
\end{corollary}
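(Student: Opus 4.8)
The plan is to construct $K$ directly from the orthogonal basis produced in Theorem \ref{ThmbasisO}. For a reproducing kernel Hilbert space equipped with an orthogonal basis $\{e^{\alpha,\nuw}_{n,\mathbf{k}}\}$, the kernel is the diagonal sum
\begin{equation*}
K(u,v) = \sum_{n\in\Z}\sum_{\mathbf{k}\in(\Z^+)^{g-1}} \frac{e^{\alpha,\nuw}_{n,\mathbf{k}}(u)\,\overline{e^{\alpha,\nuw}_{n,\mathbf{k}}(v)}}{\normnu{e^{\alpha,\nuw}_{n,\mathbf{k}}}^2}.
\end{equation*}
So the first step is to record this formula and check that it converges locally uniformly; this follows from the pointwise estimate established just before Theorem \ref{ThmbasisO}, which makes point evaluation continuous, hence guarantees the existence of $K$ and legitimizes the term-by-term manipulations below.

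Next I would substitute the explicit form \eqref{basis} of the basis functions together with the norm \eqref{norm-basiss}. Writing $u=z\omega+\widetilde{u}$ and $v=w\omega+\widetilde{v}$, and using that $\nuw,\alpha\in\R$ and $n\in\Z$, the numerator becomes
\begin{equation*}
e^{\frac{\nuw}2 z^2+2i\pi(\alpha+n)z}\, z'^{\mathbf{k}}\; e^{\frac{\nuw}2\overline{w}^2-2i\pi(\alpha+n)\overline{w}}\, \overline{w'}^{\mathbf{k}}.
\end{equation*}
This factorizes cleanly: the Gaussian factor $e^{\frac{\nuw}2(z^2+\overline{w}^2)}$ is independent of $(n,\mathbf{k})$ and pulls out, the remaining $(z,\overline{w},n)$-part depends only on $n$, and the $z'^{\mathbf{k}}\,\overline{w'}^{\mathbf{k}}$-part depends only on $\mathbf{k}$. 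Since the norm in \eqref{norm-basiss} itself factors into an $n$-dependent Gaussian constant and a $\mathbf{k}$-dependent combinatorial constant, the whole double sum splits, under absolute convergence, into a product of a sum over $n$ and a sum over $\mathbf{k}$.

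The $\mathbf{k}$-sum is the easy factor: pairing the $\mathbf{k}$-dependent part $\nuw^{|\mathbf{k}|}/\mathbf{k}!$ of the reciprocal norm against $z'^{\mathbf{k}}\,\overline{w'}^{\mathbf{k}}$ gives, coordinatewise, $\sum_{k_j\ge 0}(\nuw z_j\overline{w_j})^{k_j}/k_j!=e^{\nuw z_j\overline{w_j}}$, so the product over $j=2,\dots,g$ yields $e^{\nuw\sum_j z_j\overline{w_j}}$, which by the normalization \eqref{orth-basis} is exactly $e^{\widetilde{H}(\widetilde{u},\widetilde{v})}$. The $n$-sum is the interesting factor: after cancelling the Gaussian constant $e^{\frac{2\pi^2}{\nuw}(n+\alpha)^2}$ of the norm against its reciprocal it reads
\begin{equation*}
\sum_{n\in\Z} e^{-\frac{2\pi^2}{\nuw}(n+\alpha)^2+2i\pi(\alpha+n)(z-\overline{w})},
\end{equation*}
and I would identify this with $\theta_{\alpha,0}(z-\overline{w}\,|\,\tau)$ by matching its general term against the definition \eqref{RiemannTheta}: the argument is read off as $z-\overline{w}$ from the linear term, while the purely imaginary modular parameter is read off from the quadratic term via $i\pi\tau=-\tfrac{2\pi^2}{\nuw}$. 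Assembling the surviving numerical prefactor coming out of $\normnu{e^{\alpha,\nuw}_{n,\mathbf{k}}}^{-2}$ then produces the expression \eqref{Kernel-explicit}.

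I expect the main obstacle to be the exact bookkeeping of constants rather than any conceptual difficulty. Matching the quadratic exponent to the theta-nome — in particular keeping the $\nuw$-dependence of the modular parameter and not dropping it — and simultaneously tracking the powers of $2\nuw$ and of $\sqrt{\pi}$ emerging from the reciprocal norm is precisely where sign and factor errors creep in, so I would carry $\tau$ and the scalar prefactor symbolically and simplify only at the very end. Secondarily, I must confirm that the series so obtained genuinely reproduces: for $f=\sum a_{n,\mathbf{k}}\,e^{\alpha,\nuw}_{n,\mathbf{k}}\in\Fnug(\VgC)$ one checks $\scalnue{f,K(\cdot,v)}=f(v)$ directly from orthogonality and \eqref{norm-basiss}, which both verifies the reproducing property and shows $K$ is independent of the chosen basis; the convergence and interchange-of-summation justifications invoked above, though routine, should be stated explicitly since the factorization of the double sum into a product is valid only under absolute convergence.
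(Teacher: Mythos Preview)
Your approach coincides with the paper's: both express the kernel as the diagonal sum $\sum_{n,\mathbf{k}} e^{\alpha,\nuw}_{n,\mathbf{k}}(u)\,\overline{e^{\alpha,\nuw}_{n,\mathbf{k}}(v)}\big/\normnu{e^{\alpha,\nuw}_{n,\mathbf{k}}}^2$ coming from Theorem~\ref{ThmbasisO}, insert \eqref{basis} and \eqref{norm-basiss}, and factor the result into the $n$-sum identified with $\theta_{\alpha,0}$ times the $\mathbf{k}$-sum summing to $e^{\widetilde{H}(\widetilde{u},\widetilde{v})}$. Your extra attention to convergence, to verifying the reproducing property, and to the $\nuw$-dependence of the modular parameter goes beyond what the paper records --- and indeed your own matching $i\pi\tau=-2\pi^2/\nuw$ yields $\tau=2i\pi/\nuw$ and scalar prefactor $(2\nuw)^{g}$, so the constants printed in \eqref{Kernel-explicit} appear to carry typos that your careful bookkeeping would detect.
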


\begin{proof} Based on Theorem \eqref{ThmbasisO}, the reproducing kernel function of $\Fnug(\VgC)$ is given by
\begin{align*}
K(u,v) &=\sum\limits_{n\in\Z}\sum\limits_{\mathbf{k} \in (\Z^+)^{g-1}}
\frac{ e^{\alpha,\nuw}_{n,\mathbf{k}}(u)   \overline{e^{\alpha,\nuw}_{n,\mathbf{k}}(v)} }{ \normnu{e^{\alpha,{{\nuw}} }_{n,\mathbf{k}}}^2 }\\
 &=\sum\limits_{n\in\Z}\sum\limits_{\mathbf{k} \in (\Z^+)^{g-1}}
\frac{ e^{\frac{\nuw}2 z^2 + 2i\pi (\alpha + n) z}z'^{\mathbf{k}}   \overline{e^{\frac{\nuw}2 w^2 + 2i\pi (\alpha + n) w} w'^{\mathbf{k}}} }
{ \sqrt{2\pi\nuw} \left(\dfrac{1}{2\nuw}\right)^{g}\left(\dfrac{1}{\nuw} \right)^{|\mathbf{k}|}\mathbf{k}! e^{\frac{2\pi^2}{\nuw}(n+\alpha)^2} }\\
&= \left(\frac 1{2\pi\nuw}\right)^{1/2} \left({2\nuw}\right)^{-g} e^{\frac{\nuw}2 (z^2 + \overline{w}^2)}\theta_{\alpha,0} \left(z-\overline{w} \bigg |  {2i\pi}\right)
   e^{ \widetilde{H}(\widetilde{u}, \widetilde{v})},
\end{align*}
where  $u= z\omega+\widetilde{u}$ and $v= w\omega+\widetilde{v}.$
\end{proof}

\end{document}